\date{}
\theoremstyle{plain}
\newtheorem{theorem}{Theorem}
\newtheorem{proposition}{Proposition}
\newtheorem*{question}{Question}
\theoremstyle{definition}
\newtheorem{definition}{Definition}
\theoremstyle{remark}
\newtheorem*{remark}{Remark}
\def\N{{\mathbb N}}
\def\Z{{\mathbb Z}}
\def\R{{\mathbb R}}
\DeclareMathOperator{\tw}{t}
\DeclareMathOperator{\h}{h}
\title{On the stabilisation height of fibre surfaces in $S^3$}
\author{Sebastian Baader \and Filip Misev}
\begin{document}

\begin{abstract} The stabilisation height of a fibre surface in the 3-sphere is the minimal number of Hopf plumbing operations needed to attain a stable fibre surface from the initial surface. We show that families of fibre surfaces related by iterated Stallings twists have unbounded stabilisation height.
\end{abstract}

\maketitle

\section{Introduction}

A famous theorem by Giroux and Goodman states that every fibre surface in $S^3$ admits a common stabilisation with the standard disc~\cite{GG}. More precisely, every fibre surface $\Sigma \subset S^3$ can be obtained from the embedded standard disc by a finite sequence of Hopf plumbing operations, followed by a finite number of Hopf deplumbing operations. The necessity of the deplumbing operation was known for a long time, thanks to examples of Melvin and Morton~\cite{MM}. In this note, we show that the number of deplumbing operations is unbounded, even for fibre surfaces of genus zero. We define the stabilisation height of a fibre surface $\Sigma \subset S^3$ as
$$\text{h}(\Sigma)=\min_S \{b_1(S)-b_1(\Sigma)\},$$
where $b_1$ denotes the first Betti number and the minimum is taken over all common stabilisations $S \subset S^3$ of $\Sigma$ and the disc. The stabilisation height measures the number of Hopf plumbing operations needed to pass from $\Sigma$ to a minimal common stabilisation $S$ with the disc, since Hopf plumbing increases the first Betti number of a surface by one. 

\begin{theorem} \label{mainthm} Let $\Sigma_n \subset S^3$ be a family of fibre surfaces of the same topological type $\Sigma$ whose monodromies $\varphi_n: \Sigma \to \Sigma$ differ by a power of a Dehn twist along an essential simple closed curve $c \subset \Sigma$:
$$\varphi_n=\varphi_0 T_c^n.$$
Then
$$\lim_{|n| \to \infty} \h(\Sigma_n)=+\infty.$$
\end{theorem}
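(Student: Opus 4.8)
The plan is to pass from the geometric statement to a statement about mapping class groups, using the fact that Hopf plumbing acts on monodromies in a very controlled way. For a fibre surface $F\subset S^3$ with monodromy $\varphi_F$, let $\ell(F)$ be the least number of Dehn twists whose product equals $\varphi_F$ in $\mathrm{Mod}(F)$; this is finite, and it is conjugation-invariant because a conjugate of a Dehn twist is again a Dehn twist, so it does not depend on the chosen first-return map. I would first record the two structural inputs on Hopf plumbing: if $F$ is obtained from $F'$ by one Hopf plumbing, then $\varphi_F=T_c^{\pm1}\widehat{\varphi_{F'}}$ (in either order), where $c$ is the core of the plumbed Hopf band and $\widehat{\varphi_{F'}}$ is $\varphi_{F'}$ extended by the identity over the new handle, and the standard disc has trivial monodromy. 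Iterating, a fibre surface obtained from the disc by $m$ Hopf plumbings has monodromy equal to a product of $m$ Dehn twists, and one obtained from $\Sigma_n$ by $h$ Hopf plumbings has monodromy of the form $W\cdot\widehat{\varphi_n}$ with $W$ a product of $h$ Dehn twists.

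Next, suppose for contradiction that $\h(\Sigma_n)\le M$ for infinitely many $n$, and for each such $n$ choose a minimal common stabilisation $S_n$ of $\Sigma_n$ and the disc, so that $b_1(S_n)=b_1(\Sigma)+\h(\Sigma_n)\le b_1(\Sigma)+M$. Comparing the description of $\varphi_{S_n}$ coming from the disc side with the one coming from $\Sigma_n$, inside $\mathrm{Mod}(S_n)$, gives that $\widehat{\varphi_n}=\widehat{\varphi_0}\,T_c^n$ is a product of at most $b_1(\Sigma)+2M$ Dehn twists; and extending a shortest factorisation of $\varphi_0$ by the identity shows $\widehat{\varphi_0}$ is a product of $\ell(\Sigma_0)$ Dehn twists in $\mathrm{Mod}(S_n)$ (only this easy direction is needed, not the reverse). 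Hence $T_c^n$ is a product of at most $\ell(\Sigma_0)+b_1(\Sigma)+2M$ Dehn twists in $\mathrm{Mod}(S_n)$. Since $b_1(S_n)$ is bounded, the $S_n$ fall into finitely many topological types, so it suffices to show: for a fixed surface $S$ containing the curve $c$, the element $T_c^n$ is not a product of a bounded number of Dehn twists once $|n|$ is large (with the bound uniform over a finite list of surfaces).

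For this last point — which I expect to be the technical heart — I would use subsurface projections. Fix a marking $\mu$ of $S$ and let $A_c$ be the annular subsurface with core $c$. On the one hand $d_{A_c}(\mu,T_c^n\mu)=|n|+O(1)$, since $T_c$ acts on the (annular) curve complex of $A_c$ by a unit translation. On the other hand, the projection $\pi_{A_c}$ is Lipschitz with respect to elementary moves: there is a constant $C=C(S)$ such that a single Dehn twist changes $d_{A_c}(\mu,\cdot)$ by at most $C$, so if $\varphi$ is a product of $k$ Dehn twists then $d_{A_c}(\mu,\varphi\mu)\le kC$ (insert the intermediate markings and use the triangle inequality for $\pi_{A_c}$). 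Taking $\varphi=T_c^n$ forces $k\ge(|n|-O(1))/C$; since $C$ and the error can be chosen uniform over the finite list of types of the $S_n$, this contradicts the bound from the previous paragraph for $|n|$ large. So $\h(\Sigma_n)\le M$ for only finitely many $n$, and as $M$ was arbitrary, $\h(\Sigma_n)\to\infty$. The main obstacle is making the Lipschitz estimate for a single Dehn twist on the annular projection both clean and uniform in the curve being twisted — here one leans on the coarse geometry of the curve complex (the Masur–Minsky/Behrstock machinery). An alternative route avoiding subsurface projections would be to exhibit a homogeneous quasimorphism on $\mathrm{Mod}(S)$ not vanishing on $T_c$ and to use that its conjugation-invariance bounds it on the finitely many Dehn-twist conjugacy classes of $S$, which again limits how short a product of twists can equal $T_c^n$.
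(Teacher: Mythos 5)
Your reduction of the theorem to a group-theoretic statement is sound and closely parallels the paper's: from the disc side the monodromy of a minimal common stabilisation $S_n$ is a product of $b_1(S_n)$ Dehn twists, from the $\Sigma_n$ side it is $W\cdot\widehat{\varphi_0}\,T_c^n$ with $W$ a bounded product of twists, so a bound $\h(\Sigma_n)\le M$ forces $T_c^n$ to be a product of a bounded number of Dehn twists in $\mathrm{Mod}(S_n)$, with $S_n$ ranging over finitely many topological types. The problem is the key lemma you propose for the final step. The claimed uniform Lipschitz estimate --- a constant $C=C(S)$ such that $d_{A_c}(\nu,T_d\nu)\le C$ for \emph{every} Dehn twist $T_d$ and every marking $\nu$ --- is false, and the failure is exactly in the non-uniformity over the twisting curve $d$ that you flag as the ``main obstacle''. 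Take a one-holed torus in $S$ containing $c$ and a curve $e$ with $i(c,e)=1$, and set $d=T_c^N(e)$, $\nu\ni e$. Then $T_d=T_c^NT_eT_c^{-N}$, and a direct computation (already visible in the torus model, where twisting about $c$ is measured by slope: $e$ has slope $0$, $T_d(e)$ has slope roughly $N$) gives $d_{A_c}(\nu,T_d\nu)\asymp N$ for a \emph{single} Dehn twist. So the inequality $d_{A_c}(\mu,\varphi\mu)\le kC$ for products of $k$ twists fails, and with it the contradiction you want to derive; there is no Masur--Minsky distance formula for the infinite generating set consisting of all Dehn twists, so this route needs a genuinely different idea, not just a cleaner write-up.

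The fallback you mention in your last sentence --- a homogeneous quasimorphism nonvanishing on $T_c$, bounded on the finitely many Dehn-twist conjugacy classes by conjugation-invariance --- is the correct repair, and it is essentially the paper's proof in disguise: the paper bounds $\mathrm{scl}$ of the stabilised monodromy above by a constant depending only on $b_1(S_n)$ (a product of boundedly many twists has bounded $\mathrm{scl}$), and below by $|n|\,\mathrm{scl}(T_c)$ minus a bounded error, using $\mathrm{scl}(gh)\ge\mathrm{scl}(g)+\mathrm{scl}(h)-1$ together with the Endo--Kotschick/Korkmaz bound $\mathrm{scl}(T_c)\ge\frac{1}{18g-6}$; by Bavard duality this is the same as producing your quasimorphism. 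If you take this route you must still supply the input you currently leave implicit: the positivity result holds on closed surfaces of genus at least three with $c$ essential, whereas your $S_n$ may have small genus and $c$ may become boundary-parallel or inessential after capping off. The paper arranges this by plumbing at most six additional Hopf bands (which only shifts the constants) so that the stabilised surface has genus at least three and the complementary components of $c$ have positive genus; some such normalisation step is needed in your argument as well.
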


An easy way of producing such families is by applying iterated Stal\-lings twists to a fixed fibre surface. A detailed example of this sort, as well as a definition of a Stallings twist, are presented in the next section. We do not know whether all families of monodromies related by powers of a Dehn twist arise from an iterated Stallings twist. In any case, not all fibre surfaces support Stallings twists. Indeed, a Stallings twist requires the existence of an essential, unknotted, 0-framed simple closed curve in the fibre surface. In particular, quasipositive surfaces do not admit Stallings twists.

\begin{question} Does there exist a familiy of quasipositive fibre surfaces $\Sigma_n \subset S^3$ of the same topological type with
$$\lim_{n \to \infty} \h(\Sigma_n)=+\infty?$$
\end{question}

The existence of quasipositive fibre surfaces $\Sigma \subset S^3$ (or equivalently, pages of open books supporting the standard contact structure on $S^3$) with $\h(\Sigma) \geq 1$ was recently established by Baker et al.~\cite{BEV}, Wand~\cite{Wa} and, for the genus zero case, Etnyre-Li~\cite{EL}.

The proof of Theorem~\ref{mainthm} makes use of the stable commutator length of the monodromy. The key observation is that a Hopf plumbing operation corresponds to composing the monodromy with a Dehn twist, which cannot change the commutator length too much. On the other hand, a large power of a Dehn twist has a large commutator length, thanks to a result by Endo-Kotschick and Korkmaz~\cite{EK,Ko}. A detailed proof is given in Section~3. As a preparation, we discuss the Dehn twist length of the monodromy and present a simple family of fibre surfaces of genus zero illustrating Theorem~\ref{mainthm}.

\section{The twist length of the monodromy}

The monodromy of an iterated Hopf plumbing involving $n$ Hopf bands is a product of $n$ Dehn twists. Hopf plumbing decreases the Euler characteristic of a surface by one. A surface of genus $g$ with one boundary component has Euler characteristic $1-2g$. Therefore, if a fibred knot can be obtained from the unknot by Hopf plumbing, its monodromy can be written as a product of $2g$ Dehn twists. The following proposition (taken from the second author's PhD thesis~\cite{Mi}) shows that the number $2g$ is in fact minimal.

\begin{proposition} \label{prop:tlength}
Let $K$ be a fibred knot of genus $g$ with monodromy $\varphi$. Then any representation of $\varphi$ as a product of Dehn twists involves at least $2g$ distinct factors.
\end{proposition}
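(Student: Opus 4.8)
The plan is to read off the bound from the induced action of the monodromy on first homology. Let $F$ denote the fibre surface of $K$, a compact orientable surface of genus $g$ with one boundary component, so $H_1(F;\Q)\cong\Q^{2g}$ and $\varphi$ induces an automorphism $\varphi_*$ of this space. The decisive input is that the mapping torus of $\varphi$ is the knot exterior $S^3\setminus\nu(K)$: comparing first homology groups via the Wang sequence (equivalently, using that the Alexander polynomial of a fibred knot is the characteristic polynomial of $\varphi_*$ and satisfies $|\Delta_K(1)|=1$) shows that $1$ is not an eigenvalue of $\varphi_*$. Hence $\varphi_*-\operatorname{id}$ is an automorphism of $\Q^{2g}$; in particular it has rank $2g$.

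Next I would record the homological effect of a single Dehn twist: $T_c$, and likewise its inverse, acts on $H_1(F;\Q)$ by the transvection $x\mapsto x\pm\langle x,[c]\rangle\,[c]$, where $\langle\,\cdot\,,\,\cdot\,\rangle$ is the intersection form. Consequently $(T_c^{\pm1})_*-\operatorname{id}$ has image contained in the line spanned by $[c]$, so it has rank at most one.

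Now write $\varphi=T_{c_1}^{\varepsilon_1}\cdots T_{c_k}^{\varepsilon_k}$ with $\varepsilon_i\in\{\pm1\}$, and put $A_i=(T_{c_i}^{\varepsilon_i})_*$. From the telescoping identity
$$A_1A_2\cdots A_k-\operatorname{id}=(A_1-\operatorname{id})\,A_2\cdots A_k\;+\;(A_2\cdots A_k-\operatorname{id})$$
an immediate induction on $k$ gives $\operatorname{im}(\varphi_*-\operatorname{id})\subseteq\sum_{i=1}^{k}\Q\,[c_i]$. Comparing dimensions with the first paragraph, the homology classes $[c_1],\dots,[c_k]$ span a subspace of dimension at least $2g$, so at least $2g$ of the curves $c_i$ are pairwise distinct — in fact they represent at least $2g$ pairwise distinct, nonzero classes in $H_1(F)$. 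This is the claimed lower bound, and in particular $k\ge 2g$.

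There is no serious obstacle here: the whole argument is the linear-algebra observation that a product of $k$ transvections moves homology by a map of corank at least $2g-k$, combined with the standard fact that a fibred knot has $\Delta_K(1)=\pm1$ (equivalently, that $\varphi_*-\operatorname{id}$ is invertible). The only point worth a second thought is the reading of ``distinct factors'' as distinct curves; the homological argument in fact delivers a slightly stronger statement, namely that at least $2g$ of the $[c_i]$ are pairwise distinct and nonzero.
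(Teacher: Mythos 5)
Your proof is correct. It relies on the same three inputs as the paper -- $\Delta_K(t)=\det(t\,\mathrm{id}-\varphi_*)$, $\Delta_K(1)=\pm 1$ for a knot, and the transvection formula for the homological action of a Dehn twist -- but runs the linear algebra in the dual direction. The paper argues by contradiction on the kernel side: if fewer than $2g$ distinct twist curves occur, non-degeneracy of the intersection form produces a nonzero class $v$ orthogonal to all of them, each transvection fixes $v$, so $\varphi_*$ has eigenvalue $1$, contradicting $\Delta_K(1)=1$. You argue directly on the image side: the telescoping identity gives $\operatorname{im}(\varphi_*-\mathrm{id})\subseteq\sum_i\Q\,[c_i]$, and invertibility of $\varphi_*-\mathrm{id}$ (again from $\Delta_K(1)=\pm1$) forces the twist-curve classes to span all of $H_1(F;\Q)$, hence at least $2g$ distinct factors. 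Your route dispenses with the non-degeneracy of the intersection form and is a direct argument rather than a proof by contradiction; it also makes explicit the sharper conclusion that at least $2g$ of the classes $[c_i]$ are distinct and nonzero (indeed that they span), though the paper's argument yields the same strengthening implicitly, since its contradiction only requires $\dim\langle[c_1],\dots,[c_n]\rangle<2g$. Both approaches deliver exactly the bound $\tw(\varphi)\ge 2g$; yours is marginally more elementary as pure linear algebra, the paper's keeps the symplectic geometry of the intersection pairing in the foreground.
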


The proof of Proposition~\ref{prop:tlength} has three main ingredients. Firstly, the Alexander polynomial $\Delta_K(t)\in\Z[t]$ of a fibred knot $K$ with fibre surface $\Sigma$ equals the characteristic polynomial of the homological action $\varphi_* : H_1(\Sigma,\R)\to H_1(\Sigma,\R)$ of the monodromy $\varphi$, that is,
\[ \Delta_K(t)=\det(t \text{id} - \varphi_*). \]
Secondly, if $K$ is a knot, $\Delta_K(1)=1$. This does not hold for links of more than one component: in fact, $\Delta_L(1)=0$ if $L$ is a link with at least two components. Thirdly, the homological action of a Dehn twist $T$ about a simple closed curve $\gamma$ in $\Sigma$ can be described as follows. Let $\alpha$ be any simple closed curve in $\Sigma$ and denote by $a=[\alpha]$ and $c=[\gamma]$ the classes of $\alpha$ and $\gamma$ in $H_1(\Sigma,\R)$. Then we have (see~\cite{FM})
\[ T_*(a)=a+i(a,c)c, \]
where $i(.\, ,.)$ denotes the intersection pairing.

\begin{proof}[Proof of Proposition~\ref{prop:tlength}.]
Suppose to the contrary that $\varphi$ could be written as a product of Dehn twists with $n<2g$ distinct factors $T_1,\ldots,T_n$, where $T_i=T_{c_i}$ denotes a Dehn twist about a simple closed curve $c_i$ in~$\Sigma$. Let
\[ V=\langle [c_1],\ldots,[c_n]\rangle < H_1(\Sigma,\R) \]
be the subspace of $H_1(\Sigma,\R)$ spanned by the classes of the $c_i$. Consider the orthogonal complement of $V$ in $H_1(\Sigma,\R)$ with respect to the intersection form,
\[ V^\perp = \{ x\in H_1(\Sigma,\R)\;|\; i(x,y)=0\quad\forall y\in V\}. \]
Since $\dim V\leq n<2g=\dim H_1(\Sigma,\R)$ and $i$ is non-degenerate, we have $\dim V^\perp\geq 2g-n>0$, so there exists a non-zero vector $v\in V^\perp$. We claim that $v$ is an eigenvector of $\varphi_*$ for the eigenvalue $1$. Indeed, we may write $v$ as a finite linear combination $v=\lambda_1a_1+\ldots+\lambda_ra_r$, with $\lambda_k\in\R$ and where $a_1,\ldots,a_r\in H_1(\Sigma,\R)$ can be represented by simple closed curves $\alpha_1,\ldots,\alpha_r$ in $\Sigma$. For every fixed $k\in\{1,\ldots,r\}$, we have
\begin{eqnarray}
T_k(v) &=& \sum_{j=1}^r \lambda_jT_k(a_j) = \sum_{j=1}^r \lambda_j (a_j+i(a_j,c_k)c_k) \nonumber \\
       &=& \sum_{j=1}^r \lambda_ja_j + i(\sum_{j=1}^r \lambda_ja_j, c_k)c_k = v + i(v,c_k)c_k=v+0\cdot c_k, \nonumber
\end{eqnarray}
hence $\varphi_*(v)=v$, as claimed. But then, $1=\Delta_K(1)=\chi_{\varphi_*}(1)=0$, a contradiction.
\end{proof}

\begin{definition}
Let $\Sigma$ be an abstract surface with boundary and let $f$ be a mapping class of $\Sigma$ fixing the boundary pointwise. We define the \emph{twist length} $\tw(f)$ to be the minimal number of factors in a representation of~$f$ as a product of (positive and negative) Dehn twists:
\[ \tw(f)=\min\{k\in\N\ | \ f=T_1\cdots T_k,\quad T_i\ \text{Dehn twist}\} \]
\end{definition}

\begin{remark}
The twist length $\tw(f)$ is the word length of $f$ as an element of the mapping class group $\text{Mod}(\Sigma,\partial\Sigma)$ with respect to the generating set given by all Dehn twists along essential simple closed curves in $\Sigma$.
\end{remark}

\begin{remark}
In terms of twist length, Proposition~\ref{prop:tlength} implies $\tw(\varphi)\geq 2g$ whenever $\varphi$ is the monodromy of a fibred knot of genus $g$. The examples in Subsection \ref{subsec:example} below show that the twist length can in fact be arbitrarily large for monodromies of fibred links of fixed genus.
\end{remark}

\begin{figure}[h]
\begin{center}
\includegraphics[scale=1.4]{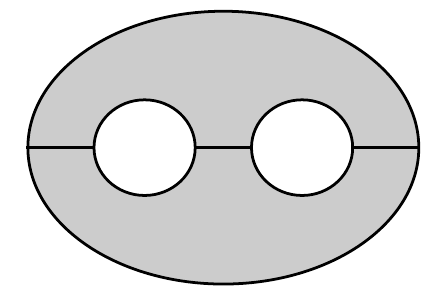}
\put(-170,30){$a$}
\put(-122,46){$b$}
\put(-57,46){$c$}
\put(-160,52){$\gamma_1$}
\put(-93,52){$\gamma_2$}
\put(-27,52){$\gamma_3$}
\put(-92,20){$\Sigma$}
\caption{A pair of pants, its three boundary curves $a,b,c$ and the three non-separating arcs $\gamma_1,\gamma_2,\gamma_3$. \label{fig:pants}}
\end{center}
\end{figure}

\subsection{Stallings twist}
Let $\Sigma\subset S^3$ be a fibre surface with monodromy~$\varphi$ and let $c \subset \Sigma$ be a $0$-framed simple closed curve which is unknotted in~$S^3$. A \emph{Stallings twist} along $c$ consists of a $\pm 1$ Dehn surgery along~$c^+$, a curve obtained by pushing $c$ slightly off $\Sigma$ in the positive normal direction. Stallings realised that this operation turns $\Sigma$ into another fibre surface $\Sigma'$ of the same topological type, whose monodromy is the map $\varphi$ composed with $T_c^\pm$ (compare~\cite{St}).

By Giroux and Goodman's theorem~\cite{GG}, the effect of a Stallings twist on $\Sigma$ can equally be achieved by plumbing and deplumbing Hopf bands. In particular cases (including the examples in Subsection~\ref{subsec:example} below), this can be seen explicitly, as remarked by Melvin and Morton (see Lemma, Figure~7 and Figure~8 in~\cite{MM}).

\subsection{Example} \label{subsec:example}
Let $\Sigma$ be an oriented pair of pants and denote $a,b,c$ its three boundary curves. For every given integer $n$, consider the mapping class
\[ \varphi_n=T_aT_b^{-1}T_c^n, \]
where $T_a,T_b,T_c$ are right-handed Dehn twists about (curves parallel to) $a,b,c$. Note that these Dehn twists commute since we can choose the corresponding twist curves to be disjoint. The following holds:
\begin{enumerate}
 \item $\tw(\varphi_n)=n+2$,
 \item there is a fibre surface $\Sigma_n\subset S^3$ whose monodromy is $\varphi_n$,
 \item $\lim\limits_{|n|\to\infty} \h(\Sigma_n) = +\infty$.
\end{enumerate}
The first statement follows from the fact that $a,b,c$ represent the only isotopy classes of simple closed curves in $\Sigma$, whence the mapping class group of $\Sigma$ is the free abelian group generated by $T_a$, $T_b$, $T_c$; compare \cite[Section~3.6.4]{FM}.

\begin{figure}[h]
\begin{center}
\includegraphics[scale=1.3]{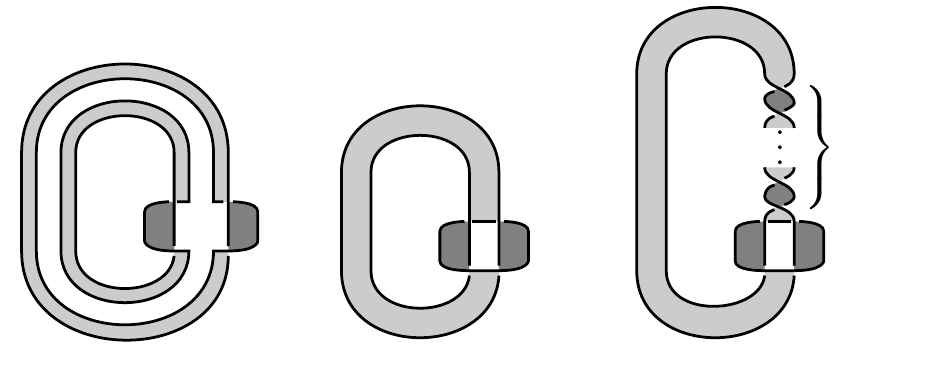}
\put(-305,75){$H_+$}
\put(-260,75){$H_-$}
\put(-225,110){$\Sigma_0=H_+\# H_-$}
\put(-45,80){\begin{minipage}[c]{2cm} \centering $n$ full\\ twists \end{minipage}}
\put(-45,10){$\Sigma_n$}
\put(-245,50){$\to$}
\put(-140,45){$\to$}
\put(-185,70){$a$}
\put(-160,70){$b$}
\put(-197,45){$c$}
\caption{Stallings twist along a curve $c$ on a connected sum of a positive and a negative Hopf band.\label{fig:Sigma}}
\end{center}
\end{figure}

For the second statement, let $H_+$, $H_-$ be a positive Hopf band with core curve $\alpha$ and a negative Hopf band with core curve $\beta$, respectively. The connected sum of $H_+$ and $H_-$ is a fibre surface $\Sigma_0\subset S^3$ homeomorphic to~$\Sigma$. Up to a permutation of $a,b,c$, we may assume that the homeomorphism sends $(\alpha,\beta)$ to $(a,b)$, so that the monodromy of~$\Sigma_0$ is given by $\varphi_0=T_aT_b^{-1}$. The curve $c \subset \Sigma$ corresponds to an unknotted $0$-framed curve in $\Sigma_0$, along which we can perform an $n$-fold Stallings twist. This turns $\Sigma_0$ into the fibre surface $\Sigma_n$ depicted in Figure~\ref{fig:Sigma} on the right, whose monodromy is $\varphi_n$.

The third statement is an application of Theorem~\ref{mainthm}. It implies in particular that $\Sigma_n$ cannot be an iterated Hopf plumbing on the standard disc for large $n$. This can also be seen directly as follows. Suppose that $\Sigma_n$ were an iterated plumbing of Hopf bands. Undoing the last Hopf plumbing amounts to cutting the surface $\Sigma_n$ along a properly embedded non-separating arc, and the resulting surface is itself a fibre surface. However, $\Sigma_n$ is homeomorphic to a pair of pants, which contains exactly three non-separating proper arcs up to isotopy (compare \cite[Proposition~2.2]{FM}). Let $\gamma_1,\gamma_2,\gamma_3$ be the arcs connecting the boundary curves $a,b$, respectively $b,c$ and $a,c$, as shown in Figure~\ref{fig:pants}. Cutting $\Sigma_n$ along any of the arcs $\gamma_1,\gamma_2,\gamma_3$ results in an unknotted annulus with $0$, $n+1$, $n-1$ full twists, respectively. Since the only fibred annuli are the positive and the negative Hopf band, we conclude that $\Sigma_n$ cannot be a Hopf plumbing for $|n|\geq 3$.

\section{The stable commutator length of the monodromy}

The goal of this section is to prove Theorem~\ref{mainthm} by estimating the effect of an iterated Dehn twist on the stable commutator length of the monodromy. Let $G$ be a perfect group, i.e. a group in which every element is a finite product of commutators. The commutator length $\text{cl}(g)$ of an element $g \in G$ is defined as the minimal number of commutators required in a factorisation of $g$ into commutators. The \emph{stable commutator length} is defined as the following limit, whose existence follows from the sub-additivity of the function $n \mapsto \text{cl}(g^n)$:
$$\text{scl}(g)=\lim_{n \to \infty} \frac{1}{n} \text{cl}(g^n).$$
The interested reader is referred to Calegari's monograph~\cite{Ca} for more background on the stable commutator length.

We will be concerned with the stable commutator length on mapping class groups of orientable surfaces of genus greater than two, a famous family of perfect groups. The main ingredient in our proof of Theorem~\ref{mainthm} is an estimate for the stable commutator length of a Dehn twist $T_c$ along an essential simple closed curve $c \subset \Sigma$, where $\Sigma$ is a closed orientable surface of genus $g \geq 3$:
$$\text{scl}(T_c) \geq \frac{1}{18g-6}.$$
This was first proved by Endo-Kotschick for Dehn twists along separating curves~\cite{EK}, then by Korkmaz for all essential curves~\cite{Ko}.

\begin{proof}[Proof of Theorem~\ref{mainthm}]
Let $\Sigma_n \subset S^3$ be a family of fibre surfaces of the same topological type $\Sigma$ whose monodromies differ by a power of a Dehn twist along an essential simple closed curve $c \subset \Sigma$:
$$\varphi_n=\varphi_0 T_c^n.$$
Let $S_n \subset S^3$ be a common stabilisation of the standard disc and the fibre surface $\Sigma_n \subset S^3$. By plumbing at most six additional Hopf bands to $\Sigma_n$, we can make sure that 
\begin{enumerate}
\item[(i)] the genus of $S_n$ is at least three,
\item[(ii)] the complementary component(s) of the curve $c \subset S_n$ have genus at least one.
\end{enumerate}
Let $k \in \N$ be the number of Hopf plumbings needed to get from $\Sigma_n$ to $S_n$. The corresponding monodromy $\overline{\varphi_n}:S_n \to S_n$ can be written as a product of $b_1$ Dehn twists, where $b_1=b_1(S_n)$ is the first Betti number of the surface $S_n$. As a consequence, the stable commutator length of $\overline{\varphi_n}$ is bounded from above by a constant $C(b_1)$ that depends only on~$b_1$:
\begin{equation} \label{scl}
\text{scl}(\overline{\varphi_n}) \leq C(b_1).
\end{equation}
Indeed, there is an upper bound on the commutator length of Dehn twists for a surface with finite first Betti number, since there are only finitely many conjugacy classes of Dehn twists in the mapping class group of a fixed surface.

\medskip
The remainder of the proof consists in showing
$$\lim_{|n| \to \infty} \text{scl}(\overline{\varphi_n})=+\infty,$$
provided that $b_1(S_n)$ is bounded while $|n|$ tends to infinity. In order to estimate the stable commutator length $\text{scl}(\overline{\varphi_n})$, we cap off the surface $S_n$ by discs (abstractly, not in $S^3$) and extend the monodromy $\overline{\varphi_n}$ by the identity on these discs. We thus obtain a diffeomorphism
$$\widetilde{\varphi_n}: \widetilde{S_n} \to \widetilde{S_n}$$
of a closed surface $\widetilde{S_n}$ of genus at least three whose stable commutator length satisfies
$$\text{scl}(\overline{\varphi_n}) \geq \text{scl}(\widetilde{\varphi_n}).$$
By construction, the map $\widetilde{\varphi_n}$ can be expressed as
$$\widetilde{\varphi_n}=T_k T_{k-1} \ldots T_1 \varphi_0 T_c^n,$$
where $T_1,T_2,\ldots,T_k$ are Dehn twists along the core curves of the $k$ Hopf bands plumbed to $\Sigma_n$. An elementary calculation (compare Section~2.7.4 on free products in~\cite{Ca}) shows
$$\text{scl}(gh) \geq \text{scl}(g)+\text{scl}(h)-1,$$
for arbitrary elements $g,h$ of a perfect group. This yields
\begin{align*}\text{scl}(\widetilde{\varphi_n}) &\geq \text{scl}(T_k T_{k-1} \ldots T_1 \varphi_0)+\text{scl}(T_c^n)-1 \\
&\geq \text{scl}(T_k T_{k-1} \ldots T_1)+\text{scl}(\varphi_0)+\text{scl}(T_c^n)-2 \\
&\geq \ldots \\
&\geq \sum_{i=1}^k \text{scl}(T_i)+\text{scl}(\varphi_0)+\text{scl}(T_c^n)-(k+1).
\end{align*}
At last, we apply Korkmaz' result (Theorem~2.1 in~\cite{Ko}) to the curve $c \subset \widetilde{S_n}$, which is non-trivial, since its complementary regions have strictly positive genus (compare~(ii)): $\text{scl}(T_c)>0$. This implies
$$\lim_{|n| \to \infty} \text{scl}(T_c^n)=\lim_{|n| \to \infty} |n| \, \text{scl}(T_c)=+\infty,$$
hence
$$\lim_{|n| \to \infty} \text{scl}(\overline{\varphi_n})=+\infty,$$
provided that $k$ (equivalently $b_1(S_n)$) is bounded while $|n|$ tends to infinity. Here again we use the fact that there is an upper bound on the commutator length of Dehn twists for a surface with finite first Betti number. This is in contradiction with~(\ref{scl}). We thus conclude
$$\displaystyle{\lim_{|n| \to \infty} b_1(S_n)=+\infty},$$
which is precisely the statement of Theorem~\ref{mainthm}: the stabilisation height of $S_n$ tends to infinity as $|n|$ does. 
\end{proof}

\bigskip
\noindent
Universit\"at Bern, Sidlerstrasse 5, CH-3012 Bern, Switzerland

\bigskip
\noindent
\texttt{sebastian.baader@math.unibe.ch}

\medskip
\noindent
\texttt{filip.misev@math.unibe.ch}

\end{document}